\documentclass[12pt]{article}
\usepackage[amsmath]{e-jc}

\makeatletter
\renewcommand{\ps@plain}{%
  \renewcommand{\@oddhead}{}%
  \renewcommand{\@evenhead}{}%
  \renewcommand{\@oddfoot}{\hfil\thepage\hfil}%
  \renewcommand{\@evenfoot}{\hfil\thepage\hfil}%
}
\makeatother

\usepackage{graphicx}
\usepackage{mathtools}
\usepackage{comment}
\usepackage{float}
\usepackage{booktabs}
\usepackage{siunitx}
\usepackage{algorithm}
\usepackage{algpseudocode}
\usepackage{setspace}
\usepackage{ytableau}
\usepackage[numbers,sort&compress]{natbib}

\hypersetup{
    colorlinks=true,
    citecolor=blue,
    linkcolor=blue,
    urlcolor=blue
}

\providecommand{\kmk}[1]{}

\title{Recursive Record Filtering and Longest Decreasing Subsequences}
\author{Jackson Zariski\authornote{1}
\and
Kaitlin Kratter\authornote{2}
}

\authortext{1}{Department of Applied Mathematics, University of Arizona
   (\email{jzariski@arizona.edu}), }
\authortext{2}{Steward Observatory, University of Arizona}

\dateline{TBD}{TBD}{TBD}
\MSC{05A05, 60C05}
\Copyright{The author.}

\begin{document}

\maketitle
\begin{abstract}
We consider a recursive record-filtering procedure, which we informally call Disappear-Sort, acting as a sort of parallel to traditional patience sorting. Let $D_n$ denote the number of passes required to eliminate a sequence of length $n$ sampled as i.i.d.\ copies of a continuous random variable, where each pass retains the left-to-right records and applies the same rule recursively to the remaining entries. For the non-resampling procedure, we associate to a permutation $p_n\in S_n$ a natural poset and show that the recursive Disappear-Sort layers form an antichain decomposition of this poset. This provides an order-theoretic interpretation of the procedure and identifies the total number of passes with $L(p_n)$, the length of the longest decreasing subsequence of $p_n$. Equivalently, after reversing the comparison direction, the Disappear-Sort layers coincide with the piles arising in classical patience sorting.

For a uniformly random permutation, the pass count therefore has the same distribution as the first-column length of the tableau shape produced by the Robinson--Schensted correspondence. We use this classical connection to express $\mathbb{E}[D_n]$ as a sum over partitions and standard Young tableaux. Established results on Plancherel-random Young diagrams then imply $\mathbb{E}[D_n]\sim 2\sqrt{n}$, with fluctuations on the $n^{1/6}$ scale governed by the Tracy--Widom distribution. We also consider a resampling variant in which the nonrecord entries are replaced after each pass by a fresh independent sample of the same size, and derive an exact recurrence for its expected number of passes involving the unsigned Stirling numbers of the first kind. We conclude with an $O(n\log n)$ implementation for computing the non-resampling pass count.
\end{abstract}

%%%%%%%%%%%%%%%%%%%%%%%%%%%%%%%%%%%%%%%%%%%%%%%%%%%
\noindent The inspiration for this work comes from the sorting algorithm informally known as \emph{Disappear-Sort} (DS for short, sometimes also referred to as \emph{Stalin-Sort}) popular in some instructional examples in computer science. A single DS pass begins at the first element of the list and proceeds from left to right, deleting each entry that is smaller than the current running maximum. Whenever a larger entry is encountered, it becomes the new running maximum, i.e. the next record. A DS pass therefore produces an increasing list, but generally at the cost of substantial data loss. For this reason, it is usually regarded as a pedagogical example rather than as a practical sorting algorithm.

However, an extension of this sorting method can be made if we choose to keep the ordering of the discarded values and then construct a new list. Sorting this list would result in more discards, and we could continue recursively until we are left with no discards at all. We call this the \emph{DS procedure}. This leads to the following question: given $n$ i.i.d. samples from a continuous random variable $X$, what is the expected number of recursive Disappear-Sort passes needed to eliminate the list? There is extensive work in record theory on the number of records in a random sample, but we are not aware of prior work on the number of records arising in the successive discard lists generated by this recursive procedure. The leader-election procedure studied by \citet{electionrecord} is related, but it iterates on records rather than on the conditional discard lists considered in this procedure. \citet{aldous_diaconis} and \citet{burstein2006combinatorics} apply similar techniques to traditional patience-sorting, though we place it in the exact context of the instructional sorting algorithm. Below is an example of the full recursive, non-resampling DS procedure on the example list $[2,5,3,9,6,4]$: \begin{align*}
    [2,5,3,9,6,4] &\to [2,5,9], ~~\mathrm{Discards} ~[3,6,4] \\
    &\to [2,5,9], [3,6],~~\mathrm{Discards} ~[4] \\
    &\to [2,5,9], [3,6], [4]
\end{align*}
Thus, the recursive DS procedure decomposes the original list into three increasing sublists. Although this example is discrete, the analysis below assumes samples drawn from a continuous distribution. In addition, another version of this method, called the Resampling Disappear-Sort procedure (RDS) changes the algorithm so that entries in the discard list are resampled independently from the original distribution. Though this variant is simpler---because deeper passes depend only on the cardinality of the discard set rather than on its values---it yields an exact recurrence and provides a useful companion model.

We begin with an overview of record theory and its connection to Disappear-Sort, both with and without resampling. We then derive exact formulas for the first moments in both procedures. After presenting Monte Carlo comparisons, we turn to the asymptotic behavior of the non-resampling model.

\section{An Overview of Record Theory and Expected Single-Pass Records}\label{sec:overview}
Consider the probability space $(\Omega, \mathcal{F}, \mathbb{P})$ and the continuous random variable $X:\Omega\to\mathbb{R}$ with cumulative distribution $F$. We define the initial list $M_0$ such that \begin{equation}\label{eq:M}
  M_0 = [X_1, X_2, \cdots, X_n]
\end{equation}
where each $X_i$ is independent and identically distributed with the same distribution as $X$. 
 Define the \emph{record indices} $T_{M_0}=\{t_{M_0}^{(i)}\}$ of $M_0$ together with the corresponding record values $R_{M_0} = \{r_{M_0}^{(i)}\}$ by \begin{equation}
    t_{M_0}^{(0)} = 1, ~~~ t_{M_0}^{(i+1)} = \min\{j: j > t_{M_0}^{(i)}, X_j > X_{t_{M_0}^{(i)}}\}
\end{equation} 
\begin{equation}
    r_{M_0}^{(i)} = X_{t_{M_0}^{(i)}}
\end{equation}
Because $X$ has a continuous distribution, ties occur with probability zero. We restrict attention to upper records (the lower-record version is analogous) and DS therefore produces an increasing list. Now, we define the indicator function 
\begin{equation}
    I_{M_0}(i) = \begin{cases}
        1, & i\in T_{M_0} \\
        0, & \mathrm{otherwise}
    \end{cases}
\end{equation}

Hence, we seek the probability $\mathbb{P}(I_{M_0}(i)=1)$, or the probability that the value at index $i$ in list $M_0$ is a record. For the traditional proof, we follow the logic from \citet{french_renyi}.

\begin{lemma}[R\'enyi]\label{lem:dontmatter}
Let $X_1,\cdots,X_i$ be i.i.d. with a continuous distribution. Then the record probability $\mathbb{P}(I_{M_0}(i)=1)=\frac{1}{i}$. 
\end{lemma}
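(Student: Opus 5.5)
The plan is to prove the lemma by exchangeability. First observe that, by the definition of the record indices $T_{M_0}$, index $i$ is a record exactly when $X_i>X_j$ for every $j<i$; that is,
\[
\{I_{M_0}(i)=1\} = \{X_i=\max(X_1,\dots,X_i)\}.
\]
For $i=1$ this event is the whole space, since $t_{M_0}^{(0)}=1$. For $i>1$ we use the recursive definition of $t^{(k+1)}$ as the first later index exceeding the current record: if $X_i$ exceeds all earlier values, no earlier index can be the next record past it, so $i$ is reached as a record. Conversely, a record value exceeds every value preceding it, because the records form the sequence of running maxima. I will check this equivalence carefully first, since it is the only place where the specific definition of $T_{M_0}$ enters.

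Second, since $F$ is continuous, $\mathbb{P}(X_j=X_k)=0$ for $j\neq k$. This follows from Fubini: it equals $\int \mathbb{P}(X_j=x)\,dF(x)=0$. So almost surely the maximum of $X_1,\dots,X_i$ is attained at a unique index. The events
\[
A_k=\{X_k=\max(X_1,\dots,X_i)\},\qquad k=1,\dots,i,
\]
are therefore disjoint up to a null set, and their union has probability $1$.

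Third, the vector $(X_1,\dots,X_i)$ is i.i.d., hence exchangeable: for any permutation $\sigma$, the vector $(X_{\sigma(1)},\dots,X_{\sigma(i)})$ has the same joint law. Taking $\sigma$ to be the transposition of $k$ and $i$ maps $A_i$ to $A_k$, so all the $\mathbb{P}(A_k)$ are equal. They sum to $1$, so each equals $1/i$. Hence $\mathbb{P}(I_{M_0}(i)=1)=\mathbb{P}(A_i)=1/i$.

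An alternative route would compute directly $\int F(x)^{i-1}\,dF(x)=\int_0^1 u^{i-1}\,du=1/i$, using the probability integral transform $U=F(X)\sim\mathrm{Unif}(0,1)$ for continuous $F$. I would mention this only as a check. The main obstacle is not technical; it is only making the record/running-maximum equivalence and the almost-sure absence of ties rigorous. Both are handled by the steps above.
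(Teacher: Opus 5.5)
Your proof is correct, but it takes a different route from the paper's.

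The paper assumes $F$ has a density $f$. It then computes $\mathbb{P}(X_i=\max\{X_1,\dots,X_i\})=\int F(x)^{i-1}f(x)\,dx=\int_0^1 u^{i-1}\,du=1/i$ via the substitution $u=F(x)$. For a general continuous $F$ it only gestures at ``symmetry of the ranks.'' It also takes for granted that $\{I_{M_0}(i)=1\}$ coincides with $\{X_i=\max\}$.

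Your exchangeability argument is that symmetry argument made rigorous:
\begin{itemize}
\item it needs no density and no computation, and it works verbatim for every continuous $F$;
\item it explicitly checks the two points the paper passes over: that the recursive definition of $T_{M_0}$ picks out exactly the strict running maxima, and that ties are null (your Fubini step).
\end{itemize}

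What the paper's integral buys is an explicit formula in which the dependence on $F$ visibly cancels under $u=F(x)$. Your route is essentially a special case of the rank-uniformity fact in Lemma~\ref{lem:uniform-ranking}: among $i$ exchangeable, almost surely distinct values, the position of the maximum is uniform. That makes it the more natural fit with the reduction to uniform permutations the paper uses later.

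Your side remark is also correct. The integral computation extends to merely continuous $F$ via $F(X)\sim\mathrm{Unif}(0,1)$, which is the cleanest way to remove the density assumption from the paper's own version.
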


\begin{proof}
For simplicity, suppose first that $F$ is absolutely continuous with density $f$; the general continuous case follows from the symmetry of the ranks. Recalling the independence of the variables, we see that: \begin{align*}
  \mathbb{P}(I_{M_0}(i)=1)&=\mathbb{P}(X_i=\max\{X_1,X_2,\cdots X_i\}) \\
  &=\int_{-\infty}^{\infty}\mathbb{P}(\max\{X_1,X_2,\cdots,X_{i-1}\}<x)f(x)dx \\
  &=\int_{-\infty}^{\infty}F(x)^{i-1}f(x)dx
\end{align*}
Substituting in $u=F(x)$ and $du=f(x)dx$ we obtain \begin{align}
    \int_{-\infty}^{\infty}F(x)^{i-1}f(x)dx&=\int_0^1u^{i-1}du \notag\\
    &=\left[\frac{u^i}{i}\right]_0^1 \notag\\
    &=\frac{1}{i}
\end{align}
Therefore $\mathbb{P}(I_{M_0}(i)=1)=\frac{1}{i}$. In particular, the probability that the element at index $i$ is a record depends only on $i$, not on the underlying continuous distribution $F$. 
\end{proof}
This classical result of R\'enyi allows us to determine the expected number of records in the initial list $M_0$ of size $n$. Now, define $N^i_{M_0}$ to be the random variable corresponding to the number of records in $M_0$ up to index $i$, so \begin{equation}
    N^i_{M_0}=\sum_{k=1}^i I_{M_0}(k)
\end{equation}
By linearity of expectation, we obtain: \begin{align} \label{eq:Hi}
    \mathbb{E}[N^i_{M_0}]&=\mathbb{E}\left[\sum_{k=1}^i I_{M_0}(k)\right] \notag\\
    &=\sum_{k=1}^i \mathbb{E}[I_{M_0}(k)] \notag\\
    &=\sum_{k=1}^i\frac{1}{k}=H_i 
\end{align}
where $H_i$ is the $i$th harmonic number. So the expected number of records in a list up to index $i$ is simply $H_i$ \citep{french_renyi}.
\section{First Moment of the Expected Passes in Disappear-Sort}
In the previous section, we considered only the initial list of values $M_0$. However, keeping the information provided in the ordered discard (non-record) set allows us to explore what we call the DS procedure. Let $N_{M_t}:=|R_{M_t}|$ be the number of records in the list $M_t$. If $|M_0|=n$, then recursively we define in the \textit{non-resampling} (DS) procedure \begin{equation}\label{eq:nonresampling}
    M_{t+1} = M_t \setminus R_{M_t}~~~ (m_{t_i}\in M_t :m_{t_i}\notin R_{M_t})
    \end{equation}
    \begin{equation}\label{eq:cardinality}
    |M_{t+1}| = |M_t| - N_{M_t}
\end{equation}
Note \eqref{eq:nonresampling} is an order-preserving list-minus operator. The procedure \textit{with resampling} still has \eqref{eq:cardinality} hold, but \eqref{eq:nonresampling} is ignored and $M_{t+1}$ is resampled independently with length $|M_t|-N_{M_t}$.
The resampling procedure thus does not depend on the values of the records left in the previous list, only the cardinality of the discard set. Now, define the quantity $D(M_0)$ to be the number of passes required to eliminate the originally sampled list $M_0$. Equivalently, this is the number of recursive passes required until no entries remain. For instance, if the list $[2,5,3,9,6,4]$ from our introduction was $M_0$, then it would result in $D(M_0)=3$ (again only in the non-resampling case). The goal of this section is to derive exact formulas for $\mathbb{E}[D_n]$, the expected number of passes in the two variants of the recursive DS procedure applied to a random list of size $n$.

\subsection{First Moment of the RDS Procedure}\label{sec:fmomresamp}
 To begin, we consider the recursive RDS procedure with resampling, as its first moment is more straightforward to derive as an exact recurrence. Analytically $D_n$ is a random variable where:
\begin{equation}
    D_n := \min\{t \ge 0 : |M_t| = 0,\ |M_0| = n\}
\end{equation}
and $M_0$ is the initial list described in \eqref{eq:M}. Additionally, we take $N_n$ to be the random variable associated to the number of records in a list of size $n$. In \eqref{eq:Hi} we found a closed-form expression to calculate $\mathbb{E}[N_n]$ directly. First, given a list of size $n$ we need to find $\mathbb{P}(N_n=r)$. Note that a sampled list $M_0$ of size $n$ can be permuted in $n!$ possible ways with uniform likelihood since we are drawing from a continuous distribution. Consider the following classical lemma.
We make the usual reduction from i.i.d.\ continuous samples to uniform random permutations.

\begin{lemma}\label{lem:uniform-ranking}
Let $X_1,\dots,X_n$ be i.i.d.\ real-valued random variables with a continuous
distribution.  Define a random permutation $p\in S_n$ (the symmetric group
on $\{1,\dots,n\}$) by the requirement that
\[
X_{p(1)} < X_{p(2)} < \cdots < X_{p(n)},
\]
whenever all $X_i$ are distinct.  Then:
\begin{enumerate}
\item $\mathbb{P}(X_i \neq X_j \text{ for all } i\neq j) = 1$, so $p$ is
well-defined almost surely; and
\item For every $p^*\in S_n$,
\[
\mathbb{P}(p = p^*) = \frac{1}{n!}.
\]
In particular, $p$ is uniformly distributed on $S_n$.
\end{enumerate}
\end{lemma}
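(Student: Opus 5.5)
For part (1), I will show that each pairwise tie has probability zero and then take a finite union bound over pairs. Fix $i\neq j$. Since $X_i$ and $X_j$ are independent, Fubini's theorem gives
\[
\mathbb{P}(X_i=X_j)=\int_{\mathbb{R}}\mathbb{P}(X_j=x)\,dF(x).
\]
Because $F$ is continuous, $\mathbb{P}(X_j=x)=F(x)-F(x^-)=0$ for every $x$, so the integrand vanishes and $\mathbb{P}(X_i=X_j)=0$. There are $\binom{n}{2}$ such events, so their union also has probability zero. Hence on an event $\Omega_0$ of probability one the values are distinct, and the sorting permutation $p$ is uniquely determined. On the null complement I will define $p$ arbitrarily, say as the identity; this does not affect any probabilities.

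For part (2), I will use an exchangeability and symmetry argument rather than integrating densities. That matters because the paper's Lemma~\ref{lem:dontmatter} proof only handled the absolutely continuous case, and the present lemma should hold for every continuous $F$.

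\textbf{Step 1: exchangeability.} For any fixed $\sigma\in S_n$, the vector $(X_{\sigma(1)},\dots,X_{\sigma(n)})$ has the same joint law as $(X_1,\dots,X_n)$. This holds because the joint law is the product measure $\mu^{\otimes n}$ with $\mu$ the law of $X$, and a product of identical factors is invariant under permuting coordinates.

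\textbf{Step 2: identify each outcome with an event.} For each $p^*\in S_n$ let
\[
A_{p^*}=\{X_{p^*(1)}<\cdots<X_{p^*(n)}\}.
\]
On $\Omega_0$ we have $\{p=p^*\}=A_{p^*}$.

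\textbf{Step 3: the events have equal probability.} Given $p^*,q^*\in S_n$, set $\sigma=p^*\circ(q^*)^{-1}$. Then $A_{p^*}$ is the event $A_{q^*}$ evaluated on the relabeled vector $(X_{\sigma(1)},\dots,X_{\sigma(n)})$. Concretely, writing $Y_k=X_{\sigma(k)}$, we get $Y_{q^*(m)}=X_{p^*(m)}$. By Step 1, $\mathbb{P}(A_{p^*})=\mathbb{P}(A_{q^*})$.

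\textbf{Step 4: conclude.} The $n!$ events $A_{p^*}$ are pairwise disjoint, and their union is exactly $\Omega_0$, so their probabilities sum to $1$. Since they are all equal, each equals $1/n!$.

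The only delicate point is the bookkeeping in Step 3, namely composing the permutations in the correct order. Using $Y_{q^*(m)}=X_{p^*(m)}$ settles it directly. Everything else is routine measure theory, and Step 1 relies only on the i.i.d.\ hypothesis.
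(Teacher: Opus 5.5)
Your proof is correct. The Fubini and union-bound argument disposes of ties, and the exchangeability argument in Steps 1--4 is sound; the composition $\sigma=p^*\circ(q^*)^{-1}$ does give $Y_{q^*(m)}=X_{p^*(m)}$. The paper does not prove this lemma itself. It cites it as a standard fact from order statistics, saying only that ties have probability zero and that the rank permutation is uniform by symmetry. Your argument is the standard one behind that citation, written out in full, and it holds for every continuous $F$ without assuming a density.
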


Because the $X_i$ are i.i.d. continuous, the permutation of the sample $(X_1,\cdots,X_n)$ is uniform on the finite symmetric group $S_n$ almost surely. For an i.i.d.\ sample from a continuous distribution, ties occur with
probability zero, and the induced rank permutation is uniformly
distributed on $S_n$, i.e. $\mathbb{P}(p = p^*) = \frac{1}{n!}$. This is a standard fact from order statistics and rank theory \citep{david_nagaraja}.

Now we derive the first moment of the resampling procedure.

\begin{theorem}
    Let $d_n=\mathbb{E}[D_n]$ represent the expected value of the number of passes needed in the DS resampling procedure to eliminate a randomly sampled list of size $n$ from a continuous, i.i.d. distribution. Then $$
        d_n=1 + \sum_{r=1}^nd_{n-r}\frac{c(n,r)}{n!}
    $$ 
\end{theorem}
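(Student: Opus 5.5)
The plan is a first-step analysis: condition on the number of records $N_n$ found in the first pass, then use the fact that in the resampling procedure the remaining passes see a fresh i.i.d.\ list whose length is the only thing that depends on the first pass. We take $d_0=0$, since an empty list needs no passes. For $n\ge 1$ the first pass always happens, and it finds $N_n=r$ records with $1\le r\le n$. The first element is always a record, so $r\ge 1$.

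\textbf{Step 1: the distribution of $N_n$.} We show $\mathbb{P}(N_n=r)=c(n,r)/n!$. By Lemma~\ref{lem:uniform-ranking}, the rank pattern of $M_0$ is uniform on $S_n$, so it suffices to count permutations of $\{1,\dots,n\}$ with exactly $r$ left-to-right maxima.

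\begin{itemize}
\item One route uses the record indicators. The indicators $I_{M_0}(1),\dots,I_{M_0}(n)$ are independent Bernoulli variables with parameters $1/i$. Independence holds because the relative rank of $X_i$ among $X_1,\dots,X_i$ is uniform and independent of the relative order of the earlier entries. Lemma~\ref{lem:dontmatter} supplies the marginals. Hence
\[
\sum_r \mathbb{P}(N_n=r)x^r=\prod_{i=1}^n\frac{x+i-1}{i}=\frac{x(x+1)\cdots(x+n-1)}{n!},
\]
and the rising factorial is the generating function $\sum_r c(n,r)x^r$ of the unsigned Stirling numbers of the first kind.
\item Alternatively, one can quote the standard bijection (Foata's fundamental transformation) between permutations with $r$ left-to-right maxima and permutations with $r$ cycles.
\end{itemize}

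\textbf{Step 2: the recurrence.} In the resampling procedure, given $N_n=r$, the list $M_1$ is a fresh independent i.i.d.\ sample of length $n-r$. Its future evolution is therefore a copy of the procedure started from size $n-r$, independent of the first pass. This gives
\[
\mathbb{E}[D_n\mid N_n=r]=1+d_{n-r}.
\]
Averaging over $r$ with the law from Step 1, and using $\sum_{r=1}^n c(n,r)=n!$, yields
\[
d_n=1+\sum_{r=1}^n d_{n-r}\,\frac{c(n,r)}{n!}.
\]
The term $r=n$ contributes $d_0=0$, which is consistent with the list being sorted already. All the $d_k$ are finite because $D_n\le n$.

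The main obstacle is Step 1, specifically justifying the independence of the record indicators, or citing the Stirling bijection cleanly. Step 2 is the routine Markov-type conditioning. I would prove independence by noting that the map sending a permutation to its sequence of relative ranks $(\rho_1,\dots,\rho_n)$, where $\rho_i$ is the rank of $X_i$ among the first $i$ entries, is a bijection onto $\prod_i\{1,\dots,i\}$. Under a uniform permutation the $\rho_i$ are therefore independent and uniform, and $I_{M_0}(i)=\mathbf{1}\{\rho_i=i\}$.
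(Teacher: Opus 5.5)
Your proposal is correct and follows the paper's proof: you condition on the number $N_n$ of first-pass records, use $\mathbb{P}(N_n=r)=c(n,r)/n!$, and use the fact that the resampled list is a fresh i.i.d.\ sample of size $n-r$, so that $\mathbb{E}[D_n\mid N_n=r]=1+d_{n-r}$. The only difference is that you derive the Stirling law yourself, via independent record indicators and the rising-factorial generating function, where the paper simply cites R\'enyi, so your version is more self-contained.
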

\begin{proof}

From R\'enyi's work on permutations, we know that the number of permutations with $1\leq r\leq n$ records is equal to the unsigned Stirling number $c(n,r)$ of the first kind \citep{french_renyi}. Recall from Lemma \ref{lem:dontmatter} without loss of generality we can assume that we are working with any continuous i.i.d. distribution. So, combined with Lemma \ref{lem:uniform-ranking} we obtain, \begin{equation}
    \mathbb{P}(N_n=r)=\frac{c(n,r)}{n!}
\end{equation}
Note that \begin{equation}
    d_n = 1 + \mathbb{E}[D_{n - N_n}]
\end{equation}
from the fact that our subsequent list $M_1$ is resampled with length $n-N_n$. Conditioning on $N_n$, we obtain
\begin{align*}
    d_n &= 1 + \mathbb{E}[D_{n - N_n}] \\
    &=1 + \sum_r\left[\mathbb{E}[D_{n - N_n} ~|~N_n=r]\cdot\mathbb{P}(N_n=r)\right] \\
    &=1 + \sum_r\mathbb{E}[D_{n-r}]\frac{c(n,r)}{n!} \\
    &=1 + \sum_rd_{n-r}\frac{c(n,r)}{n!} \\
\end{align*}
Note that our second-to-last equality above comes because we are working with the \emph{resampling} procedure\begin{equation}
\mathbb{E}[D_{n-N_n} ~|~N_n=r]=\mathbb{E}[D_{n-r}]
\end{equation}
since only the cardinality of the previous discard set is preserved and individual record values are independent through layers. Therefore the expected number of passes in the resampling RDS procedure is \begin{equation}
    d_n=\mathbb{E}[D_n]=1 + \sum_{r=1}^nd_{n-r}\frac{c(n,r)}{n!}
\end{equation}
\end{proof}
We illustrate this exact recurrence by a Monte-Carlo simulation in Figure \ref{fig:resampleMC}. This recurrence can be evaluated dynamically, with initial conditions $d_0=0$ and $d_1=1$.
\begin{figure}
\begin{center}
\includegraphics[width=0.8\textwidth]
{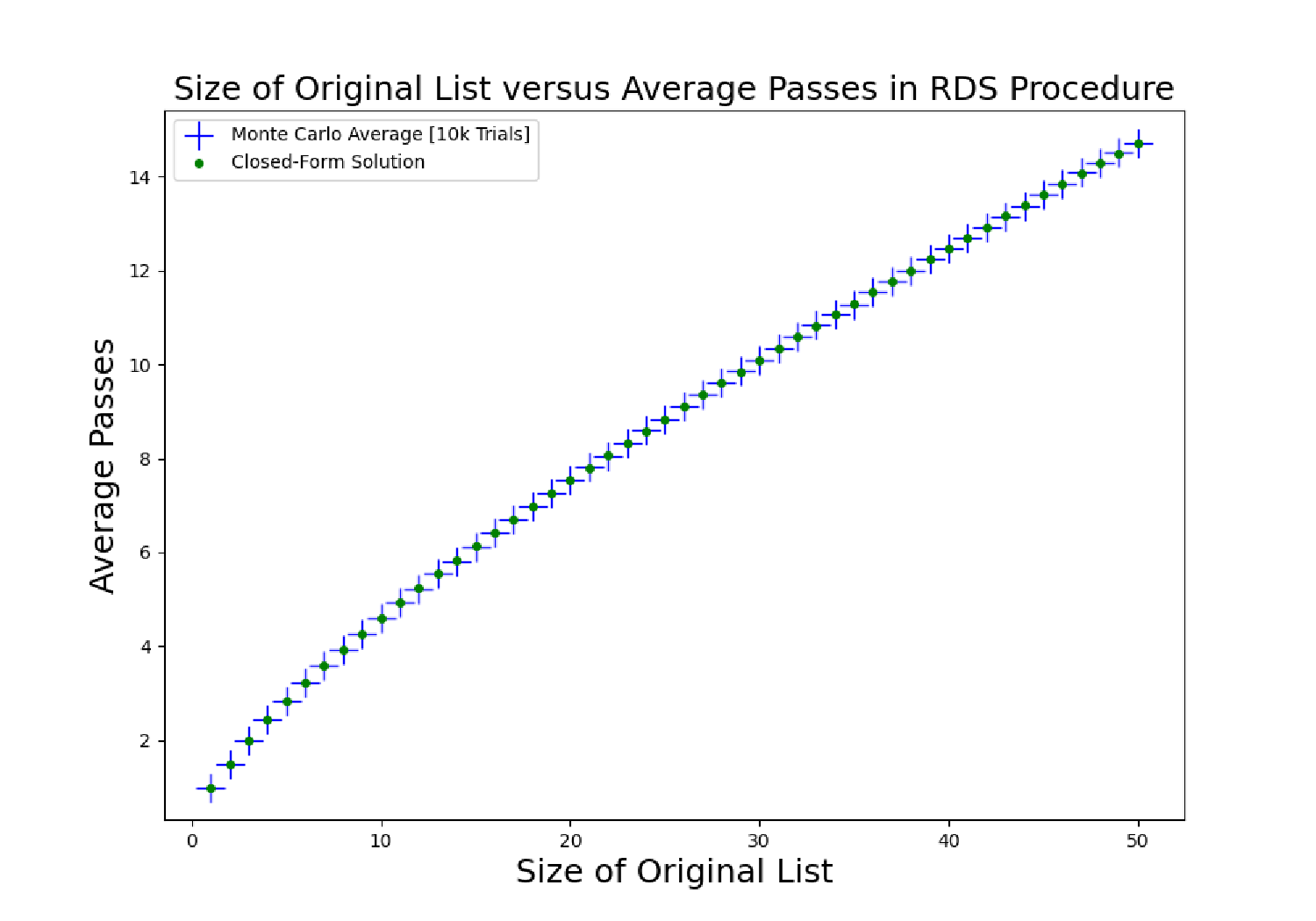}
\caption[Resampling Monte Carlo simulation]{A Monte Carlo simulation to visualize the exact recurrence for the RDS problem.}
\label{fig:resampleMC}
\end{center}
\end{figure}

\subsection{First Moment of the DS Procedure}
We now turn to the non-resampling procedure, which we abbreviate to DS. This work is adapted from traditional patience-sorting techniques, with initial comparisons to the RSK correspondence most notably from \citet{aldous_diaconis} and more recently \citet{burstein2006combinatorics}. From Section \ref{sec:overview} we know that the number of records in the originally sampled list does not depend on the underlying distribution, only on the assumption that the distribution is continuous. Accordingly, one could compute the first moment naively by enumerating all permutations of $n$ distinct values and applying the DS procedure to each of them. Formally, let $P=\{p_i\}$ denote the set of permutations of $M_0$ and define \[
D(p): P\to\mathbb{N}
\]
to be a function for the number of passes needed to complete a full Disappear-Sort procedure (eliminate the list). Then we see \begin{equation}
    \mathbb{E}[D_n] = \frac{1}{n!}\sum_{p_i}D(p_i)
\end{equation}
where $n$ is the length of the starting list in question. 
However, since this naive process scales with $\mathcal{O}(n!\cdot n\log n)$ time (see Section \ref{sec:ta}), it is not feasible for any large or even modest value of $n$. We therefore seek a more efficient exact formula.

First, fix a
permutation
\[
p = (p_1,\dots,p_n)
\]
of the values in the original list $M_0$ and define a partial order on the
index set $\{1,\dots,n\}$ by
\[
i \prec j \quad \Longleftrightarrow \quad i < j \ \text{and}\ p_i > p_j.
\]
Totally ordered subsets of this poset correspond exactly to decreasing
subsequences of $p$, so the height of this poset (the size of the largest
chain) is the length of the longest decreasing subsequence of $p$.

We now provide a few lemmas to make precise the relationship between this poset and the DS
procedure.

\begin{lemma}\label{lem:records-minimal}
Let $p=(p_1,\dots,p_n)$ and define $\prec$ as above.
Then the minimal elements of the poset $(\{1,\dots,n\},\prec)$ are exactly
the record indices of the sequence $p$, i.e.\ those $i$ for which
\[
p_i > \max\{p_j : j < i\}.
\]
where for $i=1$ the condition is interpreted as holding automatically.
\end{lemma}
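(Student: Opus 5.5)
The plan is to unwind the definitions and prove the two inclusions directly; no earlier result beyond the definition of $\prec$ is needed. Before that, I will briefly check that $\prec$ really is a strict partial order, so that ``minimal element'' is meaningful. Irreflexivity is immediate from $i<i$ failing. For transitivity, $i\prec j$ and $j\prec k$ give $i<j<k$ and $p_i>p_j>p_k$, hence $i\prec k$.

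The key observation is a direct translation of minimality. An index $i$ is minimal exactly when there is no $j$ with $j\prec i$. Unwinding the definition, this means there is no $j<i$ with $p_j>p_i$. In other words, $p_j<p_i$ for every $j<i$, where the inequality is strict because the entries of a permutation are distinct.

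The two inclusions then follow. Suppose $i$ is minimal. Then every earlier entry is smaller than $p_i$, so $p_i>\max\{p_j:j<i\}$, and $i$ is a record index. For $i=1$ the set of earlier indices is empty, so $1$ is vacuously minimal and also a record by the stated convention. Conversely, suppose $p_i>\max\{p_j:j<i\}$. Then no $j<i$ satisfies $p_j>p_i$, so no $j$ precedes $i$ in $\prec$, and $i$ is minimal.

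Finally, I will note that this record condition coincides with the recursive definition of $T_{M_0}$ from Section~\ref{sec:overview}. The index $t^{(i+1)}$ is the first index after $t^{(i)}$ whose value exceeds the current running maximum $X_{t^{(i)}}$, and that running maximum equals the maximum of all earlier entries. An easy induction on $i$ shows that $T_{M_0}$ is exactly the set of $i$ with $p_i>\max_{j<i}p_j$. I do not expect any real obstacle. The only care needed is the edge case $i=1$ and this bookkeeping between the prefix-maximum formulation and the recursive record indices.
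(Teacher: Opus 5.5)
Your proposal is correct and follows essentially the same route as the paper, which also proves the two inclusions by unwinding the definition of $\prec$: $i$ has no $\prec$-predecessor exactly when no earlier entry exceeds $p_i$. Your additional remarks are harmless refinements that the paper leaves implicit: the check that $\prec$ is a strict partial order, the use of distinctness to get a strict inequality, and the agreement with the recursive definition of $T_{M_0}$.
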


\begin{proof}
Suppose first that $i$ is a record index.  Then by definition
$p_i > p_j$ for all $j<i$.  If by contradiction there were some $j\prec i$ ($i$ not minimal), we would have
$j<i$ and $p_j > p_i$, which contradicts the record property.  Thus no
$j\prec i$ and $i$ is minimal in the poset.

Conversely, suppose by contradiction $i$ is not a record index.  Then there exists some
$j<i$ with $p_j > p_i$.  By definition this means $j\prec i$, so $i$ is not
minimal.  Hence the minimal elements are precisely the record indices.
\end{proof}

In one pass of the DS procedure we keep exactly the record
indices of $p$, so by Lemma~\ref{lem:records-minimal} the set of indices
kept in the first pass is exactly the set of minimal elements of the poset.
The following lemma makes this connection recursive through subsequent passes in the procedure and also sets up an antichain paradigm we will use later.

\begin{lemma}\label{lem:DS-layers-antichain-partition}
Let $A_1$ be the set of indices kept after the first DS pass on $p$ (i.e.\
the record indices), and let $M_1$ be the list of discarded elements,
preserving their original order.  Apply DS again to $M_1$ and let $A_2$ be
the set of indices (in the original permutation) kept on this second pass.
Continue recursively, obtaining disjoint subsets
\[
A_1,A_2,\dots,A_{D(p)} \subseteq \{1,\dots,n\},
\]
where $D(p)$ is the total number of passes performed by DS on $p$.

Then:
\begin{enumerate}
\item each $A_t$ is an antichain in $(\{1,\dots,n\},\prec)$; and
\item the sets $A_1,\dots,A_{D(p)}$ form a partition of $\{1,\dots,n\}$.
\end{enumerate}
\end{lemma}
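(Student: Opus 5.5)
The plan is to reduce both claims to Lemma~\ref{lem:records-minimal}, applied not to $p$ itself but to each successive discard list. The key observation is that the poset on a discard list is an induced subposet of the original one.

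Fix $t \ge 1$ and let $M_{t-1}$ be the list of entries remaining before the $t$-th pass. Its index set is $S_{t-1} = \{1,\dots,n\}\setminus (A_1\cup\dots\cup A_{t-1})$, and the entries keep their original relative order. The restricted relation on $S_{t-1}$ is: $i\prec j$ iff $i<j$ and $p_i>p_j$, for $i,j\in S_{t-1}$. This is exactly the relation $\prec$ built from the subsequence $M_{t-1}$ after relabelling positions $1,\dots,|S_{t-1}|$, because the list-minus operator in \eqref{eq:nonresampling} preserves both the order of positions and the comparisons of values. We may treat $M_{t-1}$ as a sequence of distinct values, so Lemma~\ref{lem:records-minimal} applies to it verbatim. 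It gives that $A_t$, the record indices of $M_{t-1}$, is precisely the set of minimal elements of the induced subposet $(S_{t-1},\prec)$.

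For part 1, the set of minimal elements of any poset is an antichain. If $i,j\in A_t$ with $i\prec j$, then $j$ would not be minimal in $S_{t-1}$; note that $i\in S_{t-1}$, so the comparison is available inside the subposet. Hence each $A_t$ is an antichain in the full poset $(\{1,\dots,n\},\prec)$, because comparability is the same relation whether we view it in the subposet or in the whole poset.

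For part 2, the sets are disjoint by construction, since $A_t\subseteq S_{t-1}$ and $S_t=S_{t-1}\setminus A_t$. Whenever $S_{t-1}\neq\emptyset$, the first remaining entry is always a record, so $A_t\neq\emptyset$ and $|S_t|<|S_{t-1}|$. The procedure therefore terminates after finitely many passes. By the definition of $D(p)$ it terminates exactly when $S_{D(p)}=\emptyset$, which means $\bigcup_{t\le D(p)} A_t=\{1,\dots,n\}$.

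The only point I expect to need care is the identification of the induced subposet on $S_{t-1}$ with the poset built from the relabelled list $M_{t-1}$. This is what justifies invoking Lemma~\ref{lem:records-minimal} at every stage rather than only at the first. I would state it explicitly as a short observation: the relabelling map from $S_{t-1}$ to $\{1,\dots,|S_{t-1}|\}$ is order-preserving on positions and leaves values unchanged, so it is a poset isomorphism. Everything else is bookkeeping.
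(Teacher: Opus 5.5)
Your proposal is correct and follows essentially the same route as the paper: apply Lemma~\ref{lem:records-minimal} to each discard list to show that the indices kept in pass $t$ are exactly the minimal elements of the induced subposet on the remaining indices, then read off the antichain property and the partition by peeling off these layers until nothing is left. You are more explicit than the paper on two points the termination claim relies on: the relabelling isomorphism that justifies reusing Lemma~\ref{lem:records-minimal}, and the fact that each pass removes at least one index (the first remaining entry is always a record). The paper checks the antichain property directly from the record definition rather than deducing it from minimality, but this difference is cosmetic.
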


\begin{proof}
We first show that each $A_t$ is an antichain. Let $i<j$ be indices kept in the same DS pass. Since both indices are retained in that pass, the corresponding elements are records in the list entering that pass. If $i\prec j$, then $i<j$ and $p_i>p_j$, so the element indexed by $j$ would have an earlier larger element in that pass and hence could not be a record, a contradiction. On the other hand, $j\prec i$ is impossible because $j<i$ fails. Therefore no two elements of $A_t$ are comparable, so $A_t$ is an antichain.

We argue by induction on $t$ that each index belongs to exactly one of the
sets $A_t$, and that $A_t$ is precisely the set of minimal elements of the
induced subposet on the remaining indices. For $t=1$ this is Lemma~\ref{lem:records-minimal}.  Suppose the claim holds
for $t=1,\dots,T-1$, and let $P_T$ be the induced subposet obtained from
$(\{1,\dots,n\},\prec)$ by removing $A_1\cup\cdots\cup A_{T-1}$.  By the
definition of the recursive DS procedure, the $T$-th pass is applied
exactly to the list $M_{T-1}$ consisting of the discarded elements, in
their original order.  Applying the argument of
Lemma~\ref{lem:records-minimal} to $P_T$ shows that the indices kept in
this pass are precisely the minimal elements of $P_T$, and again form an
antichain.  These indices are by definition $A_T$.

Since at each step we remove all minimal elements of the current induced
subposet and never re-introduce any indices, the process terminates after a
finite number of steps, and $\{A_1,\dots,A_{D(p)}\}$ is a partition of
$\{1,\dots,n\}$ into antichains.
\end{proof}

We will use the theorem of \citet{mirsky} relating the height of a poset to the minimal number of antichains partitioning the poset.

\begin{theorem}[Mirsky]\label{thm:mirsky}
For any finite poset $P$, the minimal number of antichains whose union is
$P$ is equal to $h(P)$, the size of a longest chain.
\end{theorem}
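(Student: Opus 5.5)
We prove Mirsky's theorem by establishing the two inequalities separately. Write $h=h(P)$ and let $m(P)$ be the minimal number of antichains covering $P$. Since $P$ is finite, a covering by antichains can always be refined to a partition by removing repeated elements, so covers and partitions may be treated interchangeably.

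\emph{Lower bound $m(P)\ge h$.} Fix a chain $C$ with $|C|=h$ and any family of antichains whose union is $P$. Two distinct elements of $C$ are comparable, so no antichain contains more than one element of $C$. Each of the $h$ elements of $C$ therefore needs its own antichain, and at least $h$ antichains are required.

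\emph{Upper bound $m(P)\le h$.} Here we construct an explicit partition into $h$ antichains using the height function. For $x\in P$, let $\ell(x)$ be the size of a longest chain whose maximal element is $x$. Then $1\le \ell(x)\le h$, and we set $B_k=\{x:\ell(x)=k\}$ for $k=1,\dots,h$. These sets clearly partition $P$. To see that each $B_k$ is an antichain, suppose $x\prec y$ with $\ell(x)=k$. Appending $y$ to a longest chain ending at $x$ gives a chain ending at $y$ of size $k+1$, so $\ell(y)\ge k+1$ and $y\notin B_k$. Some of the $B_k$ may be empty, but discarding them leaves at most $h$ antichains. In fact none is empty: the elements of a chain of size $h$ have $\ell$-values $1,\dots,h$, since $\ell$ is strictly increasing along a chain.

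The only step needing care is the upper-bound construction, and its key point is the strict monotonicity of $\ell$ along $\prec$. That is a one-line argument, so no real obstacle is expected.

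It is worth noting how this connects back to the paper. The layers $B_k$ are exactly the sets obtained by repeatedly stripping minimal elements. By induction, $B_1$ is the set of minimal elements of $P$, and $B_k$ is the set of minimal elements once $B_1\cup\cdots\cup B_{k-1}$ has been removed. This matches the sets $A_t$ of Lemma~\ref{lem:DS-layers-antichain-partition}. Combining this with Theorem~\ref{thm:mirsky} then gives $D(p)=h=L(p)$.
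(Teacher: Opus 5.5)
Your proof is correct. Note that the paper gives no proof of this statement; it cites Mirsky and uses the result as a black box. Your argument is the standard height-function proof. The lower bound is pigeonhole on a longest chain, and the upper bound comes from the layers $B_k=\{x:\ell(x)=k\}$, where strict monotonicity of $\ell$ along $\prec$ does all the work.

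The useful comparison is in how the paper actually uses the theorem. In the proof of Theorem~\ref{thm:DS-LDS} it invokes only your easy half: any antichain partition has at least $h(P)$ blocks, which gives $L(p)\le D(p)$. The reverse inequality comes from Lemma~\ref{lem:lengthofD(p)}. Its backward chain construction through the DS layers is in effect the nontrivial half of your argument, specialized to the greedy layering: it shows the top layer carries a chain of size $D(p)$.

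Your closing remark makes this explicit. You observe that $B_k$ is the set of minimal elements left after deleting $B_1\cup\cdots\cup B_{k-1}$, so $B_k=A_k$ from Lemma~\ref{lem:DS-layers-antichain-partition}. Now $D(p)$ counts the nonempty layers $A_t=B_t$, and you showed that exactly $B_1,\dots,B_h$ are nonempty. This gives $D(p)=h(P)=L(p)$ directly, with no separate need for Lemma~\ref{lem:lengthofD(p)}.

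So your route is self-contained and proves a stronger fact: stripping minimal elements is an optimal antichain partition of any finite poset. The paper's route is shorter on the page but relies on the citation plus a problem-specific lemma for the other inequality.
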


This will be needed later for one direction in proving an equality for $D(p)$, along with the following lemma.

\begin{lemma}\label{lem:lengthofD(p)}
Let $p=(p_1,\dots,p_n)$ be a permutation and run the recursive DS procedure. Let
$D(p)$ denote the total number of passes until the procedure terminates, and define
\[
i \prec j \iff i<j \text{ and } p_i>p_j.
\]
Then there exists a chain
\[
i_1 \prec i_2 \prec \cdots \prec i_{D(p)}
\]
with $D(p)$ elements. In particular, if $L(p)$ denotes the size of the longest
$\prec$-chain, then
\[
L(p)\ge D(p).
\]
\end{lemma}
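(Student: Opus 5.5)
We construct the chain backwards from the last layer, using the structure established in Lemma~\ref{lem:DS-layers-antichain-partition}. Write $D=D(p)$ and $A_1,\dots,A_D$ for the layers. The key observation is that every index in a layer $A_t$ with $t\ge 2$ has a predecessor in the layer just before it:
\[
\text{for every } j\in A_t,\ t\ge 2,\ \text{there exists } i\in A_{t-1} \text{ with } i\prec j.
\]
Granting this, we pick any $i_D\in A_D$, which is nonempty because the $D$-th pass is actually performed on a nonempty list. We then apply the claim repeatedly: choose $i_{D-1}\in A_{D-1}$ with $i_{D-1}\prec i_D$, then $i_{D-2}\in A_{D-2}$ with $i_{D-2}\prec i_{D-1}$, and so on down to $i_1\in A_1$. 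The indices are distinct because the layers are disjoint. This gives a chain $i_1\prec\cdots\prec i_D$ with $D$ elements, and therefore $L(p)\ge D(p)$, since $L(p)$ is the maximum chain size.

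It remains to prove the claim. Fix $j\in A_t$ with $t\ge 2$. By Lemma~\ref{lem:DS-layers-antichain-partition}, $A_{t-1}$ is exactly the set of minimal elements of the induced subposet $P_{t-1}$ on $\{1,\dots,n\}\setminus(A_1\cup\cdots\cup A_{t-2})$. The index $j$ lies in $P_{t-1}$ but not in $A_{t-1}$, so it is not minimal there. Hence there is some $k\in P_{t-1}$ with $k\prec j$.

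\textbf{Main step.} We need to replace $k$ by a minimal element of $P_{t-1}$ below $j$. The cleanest choice is to take the element of $P_{t-1}$ with the largest value among indices $<j$; equivalently, we work directly with records in the list $M_{t-2}$. Let $i$ be the last record of $M_{t-2}$ occurring before position $j$. Because $j$ was discarded in pass $t-1$, the running maximum at $j$ exceeds $p_j$. That running maximum equals $p_i$, since the running maximum is always attained at the most recent record. So we have $i<j$ and $p_i>p_j$, i.e.\ $i\prec j$, and $i\in A_{t-1}$.

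Alternatively, one can argue abstractly: in the finite subposet $P_{t-1}$, every element lies above some minimal element. Indeed, following a descending chain from $k$ must terminate, and it terminates at a minimal element $i$, with $i\preceq k\prec j$. Transitivity of $\prec$ then gives $i\prec j$. Transitivity holds because $\prec$ is a genuine partial order: if $a<b<c$ and $p_a>p_b>p_c$, then $a<c$ and $p_a>p_c$. Either route settles the claim. The running-maximum argument is more concrete, and I expect it to be the only point requiring care.
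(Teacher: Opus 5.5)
Your proposal is correct and follows essentially the same route as the paper: pick an element retained in the final pass, then work backward, at each step choosing a record of the previous pass that precedes the current element and exceeds it. Your running-maximum argument is exactly the paper's key step. The abstract alternative via minimal elements of $P_{t-1}$ and transitivity is a valid extra justification but adds nothing essential.
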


\begin{proof}
For each pass $t=1,\ldots,D(p)$, let $B_t$ denote the list entering pass $t$, so $B_1$ is the original list and $B_{t+1}$ is the discard list produced from $B_t$.
Choose $i_{D(p)}$ to be the original index of any element retained in the final pass.
Now proceed backward. For each $t=D(p),D(p)-1,\dots,2$, the element $p_{i_t}$
appears in $B_t$, so by definition it was discarded during pass $t-1$. Hence,
when $p_{i_t}$ was scanned in pass $t-1$, there existed an earlier record in that
pass whose value exceeded $p_{i_t}$. Choose one such record and let its original
index be $i_{t-1}$.

Then $i_{t-1}<i_t$ and $p_{i_{t-1}}>p_{i_t}$, so
\[
i_{t-1}\prec i_t.
\]
Repeating this construction yields a chain
\[
i_1 \prec i_2 \prec \cdots \prec i_{D(p)}.
\]
Therefore there exists a $\prec$-chain with $D(p)$ elements, and hence
\[
L(p)\ge D(p).
\]
\end{proof}

Applying Theorem~\ref{thm:mirsky} to the poset $(\{1,\dots,n\},\prec)$ together with Lemma~\ref{lem:DS-layers-antichain-partition}, we now prove the key equality underlying the exact formula for the DS expectation.

\begin{theorem}\label{thm:DS-LDS}
Let $p=(p_1,\dots,p_n)$ be a permutation and let $D(p)$ be the number of
passes performed by the non-resampling DS procedure on $p$.  Let $L(p)$
denote the length of the longest decreasing subsequence (chain) of $p$.  Then
\[
D(p) = L(p).
\]
\end{theorem}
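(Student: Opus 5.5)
The plan is to prove the two inequalities $D(p)\ge L(p)$ and $D(p)\le L(p)$ separately, using the results already established for the poset $(\{1,\dots,n\},\prec)$.

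\emph{Lower bound.} By Lemma~\ref{lem:DS-layers-antichain-partition}, the DS layers $A_1,\dots,A_{D(p)}$ form a partition of $\{1,\dots,n\}$ into $D(p)$ antichains. By Theorem~\ref{thm:mirsky}, the minimal number of antichains covering the poset equals its height $h$. Chains in $\prec$ are exactly the decreasing subsequences of $p$, so $h=L(p)$, and therefore $D(p)\ge L(p)$. One can also see this without invoking Mirsky. A chain meets each antichain in at most one element, so a chain of length $L(p)$ must meet at least $L(p)$ distinct layers, which forces $D(p)\ge L(p)$.

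\emph{Upper bound.} Lemma~\ref{lem:lengthofD(p)} constructs a chain $i_1\prec\cdots\prec i_{D(p)}$ with $D(p)$ elements, which gives $L(p)\ge D(p)$. Combining the two inequalities yields $D(p)=L(p)$.

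The only point that needs care is the backward step in Lemma~\ref{lem:lengthofD(p)}. There one must check that the witnessing larger record for $p_{i_t}$ in pass $t-1$ really lies in $B_{t-1}$ at an earlier original index. This holds because the discard lists preserve the original order, so positions within $B_{t-1}$ compare in the same way as the original indices. Since that lemma is already established, the theorem reduces to assembling these two inequalities, and I expect no genuine obstacle.
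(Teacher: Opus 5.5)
Your proposal is correct and matches the paper's proof: the antichain partition of Lemma~\ref{lem:DS-layers-antichain-partition} combined with Mirsky's theorem gives $L(p)\le D(p)$, and the backward chain construction of Lemma~\ref{lem:lengthofD(p)} gives $L(p)\ge D(p)$. Your observation that the first inequality needs only the fact that a chain meets each antichain at most once is a fair simplification, because that fact is exactly the easy half of Mirsky's theorem, which is the only part the paper actually uses.
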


\begin{proof}
By Lemma~\ref{lem:DS-layers-antichain-partition}, the sets
$A_1,\dots,A_{D(p)}$ produced by the recursive DS procedure form a partition
of the poset into $D(p)$ antichains. By Mirsky's Theorem (Theorem~\ref{thm:mirsky}), the minimal number of antichains among all partitions is equal to the height of the poset, or the size of the longest chain. 

Since the sets $A_1,\cdots A_{D(p)}$ form a partition into $D(p)$ antichains, Mirsky's theorem implies that 
\[
h(P)\leq D(p)
\]
where $P=(\{1,\cdots,n\},\prec)$. Because chains in $P$ are exactly decreasing subsequences of $p$, we have $h(P)=L(p)$, and therefore $L(p)\leq D(p)$.
On the other hand, Lemma \ref{lem:lengthofD(p)} gives $L(p)\geq D(p)$. Combining
the two inequalities yields $D(p)=L(p)$. Since chains in this poset are exactly the decreasing subsequences of $p$, this is precisely the length of the longest decreasing subsequence.
\end{proof}

In particular, for a uniform random permutation $p$ of length $n$ we have
\begin{equation}\label{eq:EK-L}
\mathbb{E}[D_n] = \mathbb{E}[L_n]
\end{equation}
where $L_n$ is the random variable corresponding to the longest decreasing subsequence of a uniform random permutation in $S_n$. As discussed in Section \ref{sec:ta}, this subsequence characterization yields an $O(n\log n)$ algorithm for computing the number of passes, without explicitly generating the successive DS layers.

\begin{figure}\label{fig:syt}
\begin{center}
\[
\scalebox{2.0}{
\begin{ytableau}
1 & 2 & 5 \\
3 & 4 \\
6
\end{ytableau}
}
\]
\caption{An example standard Young tableau of shape $(3,2,1)$, whose shape is a partition of 6.}
\end{center}
\end{figure}

With this relation in hand, we now invoke Schensted's theorem, which identifies the longest decreasing subsequence length with the first-column length of the tableau shape under the Robinson-Schensted correspondence \citep{robinson, LISLDS}.

\begin{theorem}[Schensted]\label{thm:schenthm}
Let $p\in S_n$ and let $(P,Q)$ be the pair of standard Young tableaux of the same shape $\lambda\vdash n$ associated with $p$ under the Robinson-Schensted correspondence. Then the length of the longest decreasing subsequence of $p$ is $\lambda'_1$, the length of the first column of $\lambda$. 
\end{theorem}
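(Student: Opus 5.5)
The plan is to prove Schensted's theorem by analysing row insertion directly. The key structural fact to establish is an invariant about the first row of the insertion tableau. After inserting $p_1,\dots,p_k$, let $P^{(k)}$ be the resulting tableau. For each $j$, the $j$-th entry of the first row of $P^{(k)}$ should be the smallest possible last element of an increasing subsequence of length $j$ in $p_1\cdots p_k$. Row insertion into the first row bumps the smallest entry exceeding the new value, or appends the value if none exceeds it. This is exactly the patience-sorting update, so induction on $k$ gives $\lambda_1 = $ the length of the longest increasing subsequence of $p$. This is the increasing half of Schensted's theorem.

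To obtain the decreasing statement $\lambda'_1$, I would use the transpose symmetry of the correspondence. If $p^{\mathrm{r}}=(p_n,\dots,p_1)$ is the reversal, then $P(p^{\mathrm{r}}) = P(p)^{t}$, the transpose of the insertion tableau. Decreasing subsequences of $p$ are exactly increasing subsequences of $p^{\mathrm{r}}$. Applying the first step to $p^{\mathrm{r}}$ then gives: longest decreasing subsequence of $p$ $=$ first row of $\operatorname{sh}P(p)^t$ $=\lambda'_1$.

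The transpose identity is the main obstacle. I would prove it via the column-insertion lemma: row-inserting $p_1,\dots,p_n$ in order produces the same tableau as column-inserting $p_n,\dots,p_1$ in order. The base of this argument is that row insertion of $x$ and column insertion of $y$ commute when $x$ is inserted before $y$ in the appropriate sense. I would check this commutation by a case analysis on whether $x$ or $y$ is the smaller element, tracking the bumping paths in the first row and first column. The remaining rows and columns then follow by induction on the number of cells.

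Column insertion into the transpose is the same as row insertion, so the transpose identity follows. If this case analysis becomes unwieldy, my fallback is Greene's theorem. An easier partial route is also available: the first column of $P$ has strictly increasing entries read downward, and tracking bumps shows that each entry in row $i$ was bumped by a smaller, later entry. This yields a decreasing subsequence of length $\lambda'_1$. For the reverse inequality, an argument paralleling Lemma~\ref{lem:lengthofD(p)} shows that two elements of a decreasing subsequence can never end up in the same row's bumping chain position. That inequality is the delicate part, and I expect it to need the invariant from the first step.
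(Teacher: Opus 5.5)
The paper gives no proof of this statement. It quotes it as a classical result (citing Robinson and Schensted) and uses it as a black box to pass from $D(p)=L(p)$ to $\lambda_1'$ and then to the Plancherel sum.

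Your main route is the standard self-contained proof, essentially Schensted's own:
\begin{itemize}
\item the first-row invariant gives $\lambda_1=\mathrm{LIS}(p)$;
\item row and column insertion commute, $c_y r_x(T)=r_x c_y(T)$, which holds for all distinct $x,y\notin T$ with no ordering condition needed;
\item column insertion is transposed row insertion, so together these give the reversal identity $P(p^{\mathrm r})=P(p)^t$;
\item that identity converts the first step into $\lambda_1'=\mathrm{LDS}(p)$.
\end{itemize}
Your bookkeeping is correct, including the direction of the column-insertion lemma: row-inserting $p_1,\dots,p_n$ does equal column-inserting $p_n,\dots,p_1$. All of the real content sits in the commutation lemma, which you only outline, so that is where a written proof must be careful.

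Your route makes the paper self-contained, and it links nicely to the paper: your first step is exactly the tail-array invariant behind Algorithm~\ref{alg:tails}. The citation buys brevity, which is reasonable since the paper uses nothing about Robinson--Schensted beyond this shape statement and the $(f^\lambda)^2$ count.

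One caution about your fallback sketch. An entry reaching row $i\ge 3$ need not have been bumped by an element that occurs \emph{later in $p$}, which is what a decreasing subsequence needs. Take $p=(2,4,3,1)$. Its insertion tableau has rows $(1,3),(2),(4)$, and $4$ is pushed from row $2$ into row $3$ by $2$, which precedes it in $p$. The first column $1,2,4$ sits at positions $4,1,2$, so it is not a decreasing subsequence; the actual longest one is $4,3,1$. Also, elements of a decreasing subsequence can share a row of $P$: for $p=(2,3,1)$, the entries $3$ and $1$ both end in the first row. So a naive ``at most one per row'' bound fails too. That fallback would need a genuinely different invariant, or Greene's theorem. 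Your primary route is unaffected by this.
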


The Robinson-Schensted correspondence gives a bijection between $S_n$ and pairs $(P,Q)$ of standard Young tableaux of the same shape $\lambda \vdash n$. Therefore \[
D(p)=L(p)=\lambda'_1
\]
We take $f^{\lambda}$ to be the number of standard Young tableaux with shape $\lambda$. Then the RS bijection states that the number of possible permutations with shape $\lambda$ is $(f^{\lambda})^2$. Define the random variable $\Lambda_n$ to represent the shape of the tableau generated by a uniformly sampled permutation in $S_n$. Hence \[
\mathbb{P}(\Lambda_n = \lambda)=\frac{(f^{\lambda})^2}{n!}
\]
Thus the probability that a uniformly random permutation has tableau shape $\lambda$ is $\frac{(f^{\lambda})^2}{n!}$. This is the Plancherel measure on partitions of $n$. By the hook-length formula \citep{hook}, we have \begin{equation}
    f^{\lambda}=\frac{n!}{\prod_{b\in \lambda}h(b)}
\end{equation}
where $b$ is an individual box in the young tableau of shape $\lambda$ and the function $h$ is \begin{equation}
    h(b)=a(b)+l(b)+1
\end{equation}
with $a(b)$ being the number of boxes to the right of $b$ in the same row and $l(b)$ being the number of boxes below $b$ in the same column. Here $a(b)$ and $l(b)$ denote the arm-length and leg-length of the box $b$, respectively, and $h(b)=a(b)+l(b)+1$ is the hook length. Since we showed above that the number of passes in the DS sequence equals $\lambda_1'$ we obtain \begin{equation}
\label{eq:EKnplanch}
    \mathbb{E}[D_n] = \sum_{\lambda\vdash n}\lambda_1'\mathbb{P}(\Lambda_n=\lambda) = \sum_{\lambda\vdash n}\lambda_1'\frac{(f^{\lambda})^2}{n!}=\mathbb{E}[\Lambda'_{1_n}]
\end{equation}
where $\Lambda'_{1_n}$ denotes the first-column length of $\Lambda_n$. This exact formula is computationally preferable to brute-force enumeration for moderate $n$, since the number of partitions of $n$ is much smaller than $n!$. We visualize this relationship with the following Monte Carlo simulation in Figure \ref{fig:regularMC}.
\begin{figure}
\begin{center}
\includegraphics[width=0.8\textwidth]{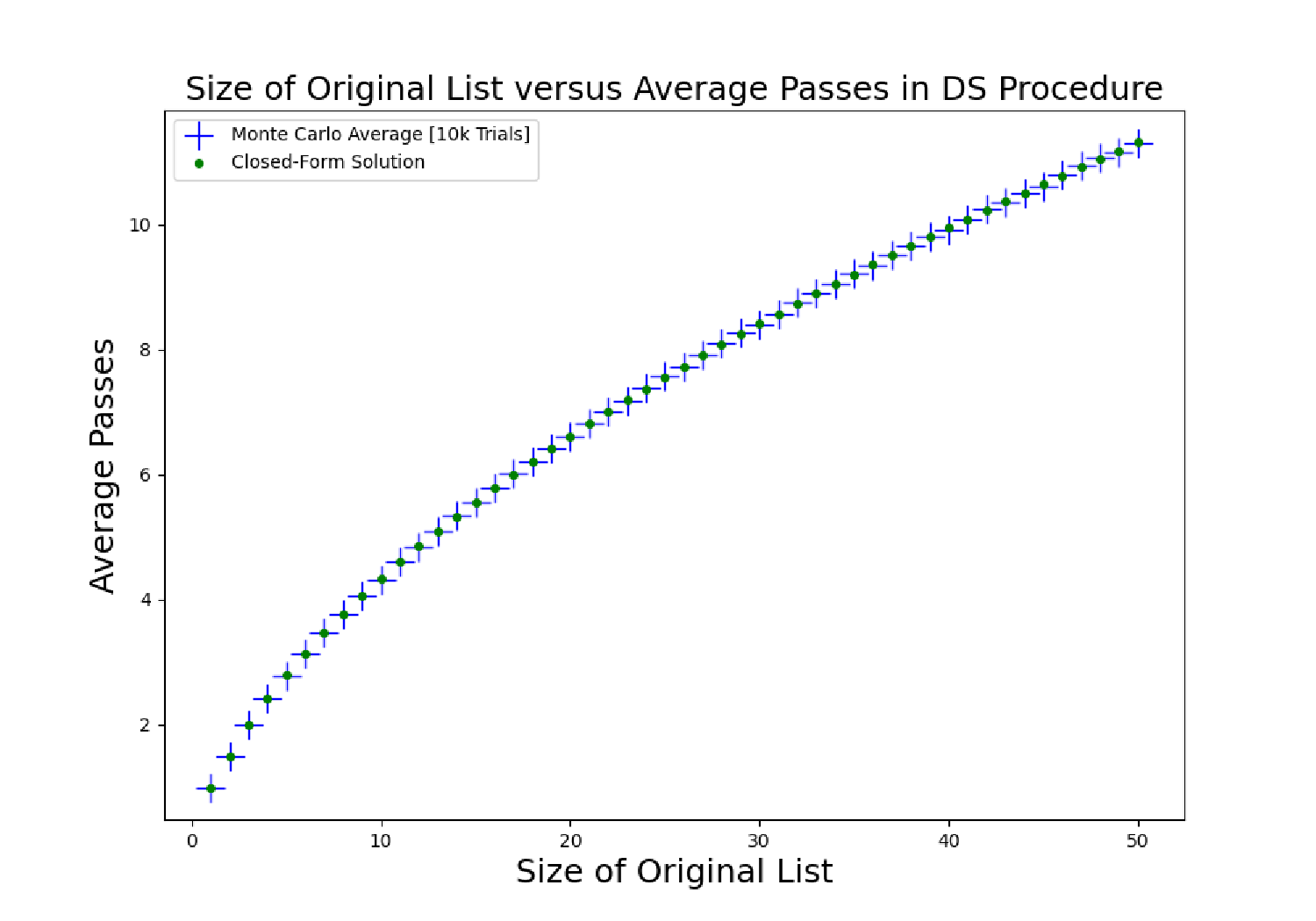}
\caption[Non-resampling Monte Carlo simulation]{A Monte Carlo simulation to visualize the exact formula for the non-resampling DS problem.}
\label{fig:regularMC}
\end{center}
\end{figure}

\subsection{Asymptotic Analysis of the DS Procedure}

We now turn to the asymptotic analysis of the first moment in the non-resampling DS procedure. 
\begin{theorem}
Let $p_n$ be a uniformly random permutation in $S_n$, and let $D_n := D(p_n)$ denote
the number of passes performed by the non-resampling DS procedure on $p_n$.
Let $\Lambda_n \vdash n$ be the random partition given by the shape of the tableau
associated with $p_n$ under the Robinson--Schensted correspondence, and let
$\Lambda_{1_n}$ and $\Lambda'_{1_n}$ denote the lengths of the first row and first column
of $\Lambda_n$, respectively. Then
\[
D_n = \Lambda'_{1_n} \qquad \text{almost surely}
\]
In particular,
\[
\frac{D_n}{2\sqrt{n}} \to 1
\qquad \text{in probability as } n \to \infty,
\]
and
\[
\frac{D_n - 2\sqrt{n}}{n^{1/6}} \Rightarrow \chi_{\mathrm{GUE}},
\]
where $\chi_{\mathrm{GUE}}$ has the Tracy--Widom $F_2$ distribution. Moreover,
\[
\mathbb{E}[D_n] = 2\sqrt{n} + \mu_2 n^{1/6} + o(n^{1/6}),
\]
where
\[
\mu_2 := \mathbb{E}[\chi_{\mathrm{GUE}}],
\]
and in particular
\[
\mathbb{E}[D_n] \sim 2\sqrt{n}.
\]
\end{theorem}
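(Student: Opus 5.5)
The plan is to reduce everything to classical facts about the longest increasing subsequence under the Plancherel measure. Throughout, Theorem~\ref{thm:DS-LDS} and Theorem~\ref{thm:schenthm} carry the combinatorics, and the cited asymptotic literature carries the analysis.

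\textbf{Almost sure identity.} By Lemma~\ref{lem:uniform-ranking}, the sample has distinct entries almost surely, and its rank permutation $p_n$ is uniform on $S_n$. On that event, Theorem~\ref{thm:DS-LDS} gives $D(p_n)=L(p_n)$, and Theorem~\ref{thm:schenthm} gives $L(p_n)=\Lambda'_{1_n}$. This proves $D_n=\Lambda'_{1_n}$ almost surely.

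\textbf{Symmetry reduction.} Next I would show that $\Lambda'_{1_n}$ has the same law as $\Lambda_{1_n}$, which is the longest increasing subsequence of a uniform permutation. There are two ways to see this. Reversal $p\mapsto (p_n,\dots,p_1)$ is a bijection of $S_n$ that exchanges increasing and decreasing subsequences. Alternatively, one can use $f^{\lambda}=f^{\lambda'}$, which follows from the hook-length formula since conjugation permutes hook lengths; hence the Plancherel measure $(f^\lambda)^2/n!$ is invariant under $\lambda\mapsto\lambda'$.

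\textbf{Asymptotics.} With this reduction in hand, I would cite three results for $\ell_n$, the longest increasing subsequence length. First, the Vershik--Kerov and Logan--Shepp results give $\ell_n/(2\sqrt n)\to 1$ in probability; I would add a remark on the earlier Hammersley subadditivity argument. Second, the Baik--Deift--Johansson theorem gives $(\ell_n-2\sqrt n)/n^{1/6}\Rightarrow F_2$. Third, the same BDJ paper establishes convergence of all moments, via uniform tail bounds: a left-tail bound $\mathbb{P}(\ell_n\le 2\sqrt n - t n^{1/6})\le Ce^{-ct^{3}}$ and a right-tail bound $\mathbb{P}(\ell_n\ge 2\sqrt n+tn^{1/6})\le Ce^{-ct^{3/2}}$. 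These transfer verbatim to $D_n$ because equality in law suffices.

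\textbf{Expectation.} The expansion of $\mathbb{E}[D_n]$ is the step needing care, since convergence in distribution alone does not give convergence of means. I would obtain it from the tail bounds above, which make $\{(D_n-2\sqrt n)/n^{1/6}\}$ uniformly integrable. Combined with the distributional limit, this gives $\mathbb{E}[(D_n-2\sqrt n)/n^{1/6}]\to\mu_2$, that is, $\mathbb{E}[D_n]=2\sqrt n+\mu_2 n^{1/6}+o(n^{1/6})$. Since $\mu_2$ is finite (approximately $-1.77$), this implies $\mathbb{E}[D_n]\sim 2\sqrt n$.

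\textbf{Main obstacle.} The only genuine obstacle is justifying the moment convergence. If I prefer not to rely on the BDJ moment statement directly, I would fall back on the Poissonization and de-Poissonization route: control the Poissonized Plancherel measure through its Toeplitz/Fredholm determinant tail estimates, then transfer back to fixed $n$ using monotonicity of $\ell_n$ in $n$.
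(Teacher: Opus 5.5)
Your proposal follows the paper's proof essentially step for step: Theorem~\ref{thm:DS-LDS} together with Schensted's theorem (Theorem~\ref{thm:schenthm}) gives the identity, conjugation (or reversal) symmetry moves you to the first row, and Logan--Shepp/Vershik--Kerov and Baik--Deift--Johansson supply the limit theorems. The only difference is that you spell out the mean expansion (tail bounds give uniform integrability, which gives convergence of means), where the paper simply cites BDJ's moment-convergence result; this is correct, though the uniform right-tail bound proved in BDJ itself is, to my recollection, weaker than $e^{-ct^{3/2}}$, and any uniform stretched-exponential bound suffices.
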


\begin{proof}
By Theorem \ref{thm:DS-LDS}, we have
\[
D(p) = L(p)
\]
for every permutation $p \in S_n$, where $L(p)$ denotes the length of the longest
decreasing subsequence of $p$. Applying this to $p_n$, we obtain
\[
D_n = L(p_n).
\]

Under the Robinson--Schensted correspondence, if $\Lambda_n$ is the shape associated
with $p_n$, then the length of the longest increasing subsequence of $p_n$ is $\Lambda_{1_n}$,
while the length of the longest decreasing subsequence is $\Lambda'_{1_n}$.
Hence
\[
D_n = L(p_n) = \Lambda'_{1_n}
\]
almost surely.

Since the Plancherel measure is invariant under conjugation of partitions, $\Lambda_n$ and
its conjugate $\Lambda_n'$ have the same distribution. Therefore
\[
\Lambda'_{1_n} \stackrel{d}{=} \Lambda_{1_n},
\]
and so
\[
D_n \stackrel{d}{=} \Lambda_{1_n}.
\]

The classical results of \citet{loganshepp} and \citet{vershik} imply that
\[
\frac{\Lambda_{1_n}}{2\sqrt{n}} \to 1
\qquad \text{in probability,}
\]
and therefore
\[
\frac{D_n}{2\sqrt{n}} \to 1
\qquad \text{in probability.}
\]

Moreover, the Baik--Deift--Johansson theorem gives \citep{bdj}
\[
\frac{\Lambda_{1_n} - 2\sqrt{n}}{n^{1/6}} \Rightarrow \chi_{\mathrm{GUE}}.
\]
Since $D_n \stackrel{d}{=} \Lambda_{1_n}$, it follows that
\[
\frac{D_n - 2\sqrt{n}}{n^{1/6}} \Rightarrow \chi_{\mathrm{GUE}}.
\]

Finally, since $D_n$ and $\Lambda_{1_n}$ have the same distribution, they have the same
expectation. Thus
\[
\mathbb{E}[D_n] = \mathbb{E}[\Lambda_{1_n}].
\]
Using the classical asymptotic expansion for the Plancherel first row \citep{bdj},
\[
\mathbb{E}[\Lambda_{1_n}] = 2\sqrt{n} + \mu_2 n^{1/6} + o(n^{1/6}),
\]
where $\mu_2 = \mathbb{E}[\chi_{\mathrm{GUE}}]$, we conclude that
\[
\mathbb{E}[D_n] = 2\sqrt{n} + \mu_2 n^{1/6} + o(n^{1/6}),
\]
and hence
\[
\mathbb{E}[D_n] \sim 2\sqrt{n}.
\]
\end{proof}

\section{Time Analysis}\label{sec:ta}
In this section we describe an $O(n\log n)$ algorithm for computing the DS score of a sequence. By Theorem \ref{thm:DS-LDS}, the number of passes needed to eliminate a sequence is exactly the length of its longest decreasing subsequence. A direct implementation of the non-resampling DS procedure has worst-case running time $O(n^2)$ so it is preferable to work through the subsequence characterization. For a sequence $p$ let $LDS(p)$ denote the length of its longest decreasing subsequence and let $LIS(p)$ denote the length of its longest increasing subsequence. Using the classical identity $LIS(-p)=LDS(p)$ \citep{LISLDS}, it therefore suffices to compute the length of the longest increasing subsequence of $-p$, which we do in Algorithm \ref{alg:tails}. 

Algorithm \ref{alg:tails} scans the sequence once, and for each element performs a binary search in the usual tail array that tracks the smallest possible terminal values of increasing subsequences of each length. This gives us a time complexity of $O(n\log n)$. Although Algorithm \ref{alg:tails} is the standard method for computing LIS length and is not new, its connection with the DS procedure provides a more efficient way to compute the DS score, using the $LIS(-p)=LDS(p)$ identity.

\begin{algorithm}[H]
\caption{DS Score via LIS of the Negated Sequence}
\label{alg:tails}
\begin{algorithmic}[1]
\Require Sequence $p=(p_1,\ldots,p_n)$
\Ensure $D(p)=\operatorname{LDS}(p)$
\State $T \gets [\ ]$ \Comment{$T[\ell]$ = minimum possible tail of an increasing subsequence of length $\ell+1$}
\For{$k \gets 1$ to $n$}
    \State $x \gets -p_k$
    \State $i \gets \operatorname{LowerBound}(T,x)$
    \Comment{smallest index $i$ with $T[i] \ge x$; returns $\mathrm{len}(T)$ if none}
    \If{$i=\operatorname{len}(T)$}
        \State append $x$ to $T$
    \Else
        \State $T[i] \gets x$
    \EndIf
\EndFor
\State \Return $\operatorname{len}(T)$
\end{algorithmic}
\end{algorithm}

\newpage

\bibliographystyle{plainnat}
\bibliography{ref_clean.bib}
\end{document}